\newcounter{alphthm}
\newtheorem{thm}{Theorem}[section]
\newtheorem{lem}[thm]{Lemma}
\newtheorem{cor}{Corollary}[section]
\theoremstyle{definition}
\newtheorem{rem}{Remark}[section]
\newcommand{\be}{\begin{equation}}
\newcommand{\ee}{\end{equation}}
\newcommand{\ben}{\begin{enumerate}}
\newcommand{\een}{\end{enumerate}}
\newcommand{\pa}{{\partial}}
\newcommand{\pxi}{{\pa \over \pa x^i}}
\title{On Generalized Douglas-Weyl $(\alpha, \beta)$-Metrics\footnote{Acta Mathematica Sinica, English Series, Vol. 31, No. 10, (2015), 1611-1620}}
\author{A. Tayebi and H. Sadeghi}
\begin{document}

\maketitle
\begin{abstract}
In this paper, we  study generalized Douglas-Weyl  $(\alpha, \beta)$-metrics. Suppose that an regular $(\alpha, \beta)$-metric $F$ is not of Randers type. We prove that $F$ is a generalized Douglas-Weyl metric with vanishing S-curvature if and only if it is a Berwald metric. Moreover by ignoring the regularity, if $F$ is not a Berwald metric then we find a family of almost regular Finsler metrics which is not Douglas nor Weyl. As its application, we show that   generalized Douglas-Weyl square metric or Matsumoto metric  with isotropic mean Berwald curvature are Berwald metrics.\\\\
{\bf {Keywords}}:  Generalized Douglas-Weyl  metric,  Weyl metric, Douglas metric, $S$-curvature.\footnote{ 2010 Mathematics subject Classification:  53B40, 53C60.}
\end{abstract}

%It results that, an almost regular generalized Douglas-Weyl $(\alpha,\beta)$-metric  with vanishing $S$-curvature which is not Berwaldian reduces to the unicorn metric constructed by Shen.

\section{Introduction}\label{Int}
In \cite{Cheng}, Cheng proved that every non-Randers type $(\alpha, \beta)$-metric  of scalar
flag curvature   with vanishing S-curvature is a Berwald metric. Finsler metrics of scalar flag curvature are called Weyl metrics.  In \cite{Sak}, Sakaguchi showed that every Weyl metric is a generalized Douglas-Weyl metric. This motivates  us to study generalized Douglas-Weyl  $(\alpha, \beta)$-metric with vanishing S-curvature.

For considering generalized Douglas-Weyl metric, let us introduce some Finslerian notions.  Let $(M, F)$ be a Finsler manifold. In local coordinates, a curve $c(t)$ is a geodesic if and only if its coordinates $(c^i(t))$ satisfy $ \ddot c^i+2G^i(\dot c)=0$, where the local functions $G^i=G^i(x, y)$ are called the  spray coefficients.  $F$ is  called a Berwald metric, if  $G^i$  are quadratic in $y\in T_xM$  for any $x\in M$ or equivalently   $2G^i=\Gamma^i_{jk}(x)y^jy^k$. As a generalization of Berwald curvature,  B\'{a}cs\'{o}-Matsumoto introduced the notion of  Douglas metrics which are  projective invariants in Finsler geometry \cite{BM}.   $F$ is called a Douglas metric if $G^i=\frac{1}{2}\Gamma^i_{jk}(x)y^jy^k + P(x,y)y^i$.

Other than Douglas metrics, there is another projective invariant in Finsler geometry, namely
\[
D^i_{\ jkl|m}y^m= T_{jkl}y^i
\]
that is hold for some tensor $T_{jkl}$, where $D^i_{\ jkl|m}$ denotes the horizontal covariant derivatives of Douglas curvature $D^i_{\ jkl}$ with respect to the Berwald connection of $F$. For a manifold $M$, let ${\mathcal GDW}(M)$ denotes the class of all Finsler metrics satisfying in above relation for some tensor $T_{jkl}$. In \cite{BP}, B\'{a}cs\'{o}-Papp showed that ${\mathcal GDW}(M)$ is closed under projective changes. Then, Najafi-Shen-Tayebi  characterized  generalized Douglas-Weyl Randers metrics  \cite{NST1}. Recently,  the  authors with Peyghan prove that all generalized Douglas-Weyl spaces with vanishing Landsberg curvature  have vanishing the quantity $\bf H$ \cite{TSP}.

In order to find explicit examples of generalized Douglas-Weyl metrics, we consider $(\alpha, \beta)$-metrics.  This class of metrics was first introduced by Matsumoto which appear iteratively in formulating Physics, Mechanics,   Biology and Ecology, etc  \cite{BCZ}\cite{Mat}. An $(\alpha, \beta)$-metric is a Finsler metric of the form $F:=\alpha \phi(s)$, $s=\beta/\alpha$, where  $\phi=\phi(s)$  is a $C^\infty$ on $(-b_0, b_0)$, $\alpha=\sqrt{a_{ij}(x)y^iy^j}$ is a Riemannian metric and $\beta =b_i(x)y^i$ is a 1-form on  $M$. In particular,  $\phi= c_1\sqrt{1+c_2s^2}+c_3s$, $s=\beta/\alpha$,  are called Randers type metrics where  $c_1>0$, $c_2$ and $c_3$ are constant.

The notion of  S-curvature is originally introduced by Shen for the volume comparison theorem which interacts with other Riemannian and non-Riemannian curvatures \cite{Shvol}\cite{SX}\cite{TR}.   The Finsler metric $F$ is said to be of isotropic S-curvature if ${\bf S}=(n+1)cF$, where $c=c(x)$ is a scalar function on $M$. If c is a constant, then F is said to be of constant S-curvature. The  studies  how that the S-curvature plays a very important role in Finsler geometry \cite{MY}. Recently, Shen proved that every negatively curved Finsler metric with constant S-curvature on a closed manifold must be Riemannian  \cite{ShZ}.   In \cite{TR}, it is proved that every isotropic Berwald metric has isotropic S-curvature. The Finsler metrics with vanishing  S-curvature are of some important geometric structures which deserve to be studied deeply. For example, Shen proved that the Bishop-Gromov volume comparison holds for Finsler manifolds with vanishing S-curvature \cite{Sh}.

In this paper, we  characterize the generalized Douglas-Weyl $(\alpha,\beta)$-metrics with vanishing S-curvature. More precisely, we prove the following.

\begin{thm}\label{mainthm1}
Let $F=\alpha\phi(s)$, $s=\beta/\alpha$, be a regular  $(\alpha,\beta)$-metric on a manifold $M$. Suppose that $F$ is not a Finsler metric of Randers type. Then  $F$  is a generalized Douglas-Weyl metric with vanishing S-curvature if and only if it is a Berwald metric.
\end{thm}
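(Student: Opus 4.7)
The reverse direction is straightforward. A Berwald $(\alpha,\beta)$-metric has $\beta$ parallel with respect to $\alpha$ (Matsumoto--H\=oj\=o), so $b_{i|j}=0$ and $G^i=G^i_\alpha$. Therefore the Berwald curvature vanishes, so in particular the Douglas curvature $D^i_{\ jkl}\equiv 0$, and the GDW identity holds trivially with $T_{jkl}=0$. The same parallelism $b_{i|j}=0$ kills every tensor appearing in the Cheng--Shen formula for the S-curvature of an $(\alpha,\beta)$-metric, hence $\mathbf{S}=0$.

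For the forward direction, the plan is to start from the standard decomposition
\[
G^i=G^i_\alpha+\alpha Q\,s^i_{\ 0}+(r_{00}-2\alpha Qs_0)\bigl\{\Psi\,b^i+\Theta\,\alpha^{-1}y^i\bigr\},
\]
where $Q=\phi'/(\phi-s\phi')$ and $\Psi,\Theta$ are the usual explicit rational functions of $\phi,\phi',\phi''$. The hypothesis $\mathbf{S}=0$ will be exploited through Cheng--Shen's characterization for $(\alpha,\beta)$-metrics: once the Randers branch is excluded, this translates into an ODE on $\phi$ together with structural conditions relating $r_{00}$, $s_0$ and $s^i_{\ 0}$. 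This yields a first set of algebraic identities that $\phi$ and the tensors $r_{ij},s_{ij}$ must satisfy.

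Next, I would compute the Douglas curvature $D^i_{\ jkl}$ from the above $G^i$, take its horizontal Berwald covariant derivative $D^i_{\ jkl|m}y^m$, and substitute into the GDW identity $D^i_{\ jkl|m}y^m=T_{jkl}y^i$. Multiplying through by an appropriate power of $\alpha$ turns this into a polynomial identity in $y$; because $Q,\Psi,\Theta$ carry the irrational factor $\alpha=\sqrt{a_{ij}y^iy^j}$, the identity splits into a rational and an irrational piece in $y^j$ which must vanish separately. Combining the resulting overdetermined system with the ODE extracted from $\mathbf{S}=0$, and using crucially that $\phi$ is not Randers (so the ODE for $\phi$ does not degenerate into the Randers solution family), forces $r_{ij}=0$ and $s_{ij}=0$, i.e.\ $b_{i|j}=0$. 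Then $G^i=G^i_\alpha$ is quadratic in $y$ and $F$ is Berwald.

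The main obstacle will be the final elimination step: after the rational/irrational separation, the system couples $\phi$-derivatives of all orders with the tensors $r_{ij}$, $s_{ij}$, $s_i$, and $b_i$, and one must show that the Randers family is the \emph{only} way in which the $\phi$-side can be degenerate enough to permit nonzero $r_{ij}$ or $s_{ij}$. In spirit this upgrades Cheng's argument in~\cite{Cheng} from scalar flag curvature to the weaker GDW condition; the point is that the GDW equation, via the coefficient of $y^i$ in $D^i_{\ jkl|m}y^m$, still carries enough $\beta$-dependent information to substitute for the missing scalar-flag-curvature equation in Cheng's proof.
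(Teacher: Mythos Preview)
Your reverse direction is fine. In the forward direction there are two substantive misreadings of how the argument actually runs.

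First, you have the output of Cheng--Shen backwards. For a non-Randers $(\alpha,\beta)$-metric with $\mathbf{S}=0$, their lemma (case~(b)) gives $r_{ij}=0$ and $s_j=0$ \emph{outright}; no ODE on $\phi$ is produced at this stage. The spray collapses immediately to $G^i=G^i_\alpha+\alpha Q\,s^i_{\ 0}$, so the only obstruction left to Berwaldness is $s_{ij}$. You should not be trying to extract $r_{ij}=0$ from the GDW identity; it is already in hand before GDW is ever invoked. The non-Randers hypothesis is spent entirely on this step.

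Second, you misidentify what eliminates the residual branch. The paper does not use a rational/irrational split. Since $\mathbf{S}=0$ forces $H_{ij}=0$, the GDW identity reduces to $h^m_iB^i_{\ jkl|s}y^s=0$; contracting successively with $y_m$ and then with $b^jb^kb^l$ yields the scalar equation
\[
s^m_{\ 0}s_{m0}\bigl[(b^2-s^2)Q''+(Q-sQ')\bigr]=0.
\]
So either $s^m_{\ 0}s_{m0}=0$, hence $s_{ij}=0$ and $F$ is Berwald, or $Q$ satisfies $(b^2-s^2)Q''+(Q-sQ')=0$, whose general solution $Q=ks+q\sqrt{b^2-s^2}$ integrates to a $\phi$ that is only \emph{almost} regular (singular along $\beta=\pm b\alpha$). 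It is therefore the \emph{regularity} hypothesis on $F$---not the non-Randers hypothesis---that kills this branch. Your closing paragraph assigns this role to ``$\phi$ is not Randers,'' which is the wrong lever; without invoking regularity you cannot exclude the exceptional family, and your proposed elimination step would stall there.
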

According to Cheng's theorem  ( Theorem 4 in \cite{Cheng}), every non-Randers type $(\alpha, \beta)$-metric  on a manifold $M$ of dimension $n\geq 3$ is Weyl metric with vanishing S-curvature if and only if it is a Berwald metric with vanishing flag curvature. Here, we weaken Cheng' condition on the Weyl metrics to the generalized Douglas-Weyl metrics. We also delete the dimension's condition on manifold. Then Theorem \ref{mainthm1}, is an extension of Cheng's theorem.

We must mention that Theorem \ref{mainthm1} does not hold  for Finsler metrics of Randers
type. The family of  Randers metrics on $S^3$ constructed by Bao-Shen are generalized Douglas-Weyl metric with ${\bf S}=0$  which are not Berwaldian (see \cite{Sh}, page 164).

In Theorem \ref{mainthm1},  vanishing of  S-curvature is necessary. For example, consider following  Finsler metric on the unit ball $\mathbb B^n$
\begin{eqnarray*}\label{berwald}
F:=\frac{(\sqrt{(1-|x|^2)|y|^2+\langle x,y\rangle^2}+\langle
x,y\rangle)^2}{(1-|x|^2)^2\sqrt{(1-|x|^2)|y|^2+\langle
x,y\rangle^2}},
\end{eqnarray*}
where $|.|$ and $<,>$ denote the Euclidean norm and inner product in $\mathbb{R}^n$, respectively. This metric is projectively flat with flag curvature ${\bf K}=0$ (see page 96 in \cite{BCZ}). Therefore  $F$ is a Weyl metric and then it is a generalized Douglas-Weyl metric, also. $F$ satisfies  ${\bf S}\neq 0$ and  is not a Berwald metric.

Theorem \ref{mainthm1}, may not be hold for an $(\alpha,\beta)$-metric of  constant S-curvature. For example, at a point $x=(x^1, x^2)\in R^2$ and in the direction $y=(y^1, y^2)\in T_xR^2$, consider Riemannian metric $\alpha(x,y)=\sqrt{(y^1)^2+e^{2x^1}(y^2)^2}$ and one form $\beta(x, y):=y^1$. Then $s_{ij}=0$, $r_{ij}=a_{ij}-b_ib_j$ and $\epsilon=b=1$. Thus if $\phi=\phi(s)$ satisfies (\ref{P}) for some constant $k$, then $F=\alpha\phi(\beta/\alpha)$ has constant S-curvature  ${\bf S}=3kF$. Since every two-dimensional metric is a Weyl metric, then $F$ is a generalized Douglas-Weyl metric while it is not a Berwald metric.

Let $\phi = \phi(s)$ satisfy  $\phi(s) >0$ and $\phi(s) -s \phi'(s) + (b^2-s^2) \phi''(s) >0$, where $( |s|\leq b \leq b_o)$. A function $F=\alpha\phi(s)$ is called an almost regular $(\alpha,\beta)$-metric if $\beta$ satisfies that $||\beta_x||_{\alpha}\leq b_0$, $\forall x\in M$ \cite{ShC}. An almost regular $(\alpha,\beta)$-metric $F=\alpha\phi(s)$  might be singular (even not defined) in the two extremal directions $y\in  T_xM$ with $\beta(x, y) =\pm b_0\alpha(x, y)$. By assumptions of  Theorem \ref{mainthm1},   we find that  if $F$ is almost regular and not Berwaldian then it reduces to
\begin{eqnarray*}
\phi=c\exp\Bigg[\int_0^s{\frac{kt+q\sqrt{b^2-t^2}}{1+kt^2+qt\sqrt{b^2-t^2}}dt}\Bigg],
\end{eqnarray*}
where $c>0$, $q>0$ and  $k$  are real constants. The above Finsler metric  is not a  Douglas metric nor Weyl metric. It is not Landsberg metric, also (see Remark \ref{rem}).

\bigskip

Taking twice vertical covariant derivatives  of  $S$-curvature ${\bf S}$ gives rise $E$-curvature  ${\bf E}$.  The Finsler metric $F$ is  said to have isotropic mean Berwald curvature if ${\bf E}=\frac{n+1}{2}cF{\bf h}$, where $c=c(x)$ is a scalar function on $M$ and ${\bf h}=h_{ij}dx^i\otimes dx^j$ is the angular metric.  Among the $(\alpha, \beta)$-metrics, the  square metric $F=\alpha+2\beta+\beta^2/\alpha$  and the  Matsumoto metric $F=\alpha^2/(\alpha-\beta)$ are significant metric which constitute a majority of actual research.
\begin{cor}\label{cor1}
Let $F$ be a generalized Douglas-Weyl square metric or Matsumoto metric. Suppose that  $F$  has isotropic mean Berwald curvature. Then it reduces to a  Berwald metric.
\end{cor}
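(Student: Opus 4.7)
The overall plan is to combine Theorem \ref{mainthm1} with a rigidity property of the S-curvature for square and Matsumoto metrics, so that the corollary follows once we can upgrade the isotropic mean Berwald hypothesis to vanishing S-curvature. Both $F = \alpha + 2\beta + \beta^2/\alpha$ and $F = \alpha^2/(\alpha-\beta)$ are regular $(\alpha,\beta)$-metrics whose generating function $\phi$ is not of Randers type, so Theorem \ref{mainthm1} applies the instant we know that $\mathbf{S} = 0$. Thus the entire task reduces to: isotropic mean Berwald $+$ $(\alpha,\beta)$-structure of square or Matsumoto type $\Longrightarrow$ $\mathbf{S} = 0$.

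First I would use the standard identity $\mathbf{E}_{ij} = \tfrac{1}{2} \mathbf{S}_{\cdot i \cdot j}$ relating the mean Berwald curvature to the vertical Hessian of the S-curvature. Integrating $\mathbf{E}_{ij} = \tfrac{n+1}{2} c F h_{ij}$ along the fibers of $TM \setminus \{0\}$, together with the positive homogeneity of degree one of $\mathbf{S}$, forces $\mathbf{S}$ to have the form $\mathbf{S} = (n+1)c(x) F$. So the isotropic mean Berwald hypothesis is equivalent to isotropic S-curvature with the same function $c(x)$.

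The decisive step is then to plug the square or Matsumoto choice of $\phi$ into the general formula for the S-curvature of an $(\alpha,\beta)$-metric, in terms of $r_{ij}$, $s_{ij}$, $b := \|\beta_x\|_\alpha$ and the derivatives of $\phi$ (Cheng--Shen), and compare with the ansatz $\mathbf{S} = (n+1)c(x) F$. After clearing denominators one obtains a polynomial identity in $\alpha$ and $\beta$. Matching the coefficients of the resulting monomials $\alpha^i \beta^j$ — equivalently, examining the expansion in the variable $s = \beta/\alpha$ at appropriate orders — produces an overdetermined system whose only solution for a non-Randers, non-Kropina $\phi$ is $c \equiv 0$. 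This is the place where the explicit structure of the square and Matsumoto metrics is exploited; each case is handled separately but by the same template. Once $c = 0$ is established, $\mathbf{S} = 0$, and Theorem \ref{mainthm1} immediately yields that $F$ is a Berwald metric.

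The main obstacle is the coefficient-matching argument that forces $c = 0$: the polynomial identity produced from the S-curvature formula is bulky, and one must keep track of the contribution of $r_{00}$, $s_0$, and $b$ in each case. Since the elimination runs through essentially the same route for both metrics and is well documented in the $(\alpha,\beta)$-literature, the bookkeeping is tractable, but it is the one nontrivial computation behind the corollary; everything else is a clean application of Theorem \ref{mainthm1}.
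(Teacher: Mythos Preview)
Your overall strategy---reduce the hypothesis to $\mathbf{S}=0$ and then invoke Theorem~\ref{mainthm1}---is exactly the paper's. The paper, however, does not pass through isotropic S-curvature at all: it simply quotes Cui \cite{Cui}, who proved that for the Matsumoto metric and for $F=\alpha+\epsilon\beta+\kappa\beta^{2}/\alpha$ ($\kappa\neq 0$) the isotropic mean Berwald condition is already equivalent to $\mathbf{S}=0$, and then applies Theorem~\ref{mainthm1}. No computation is carried out in the paper itself.

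Your intermediate step contains a genuine gap. From $2\mathbf{E}_{ij}=\mathbf{S}_{\cdot i\cdot j}$ and $\mathbf{E}_{ij}=\tfrac{n+1}{2}\,c\,F^{-1}h_{ij}$ (the exponent must be $-1$ on homogeneity grounds; the $F$ in the introduction is a misprint, and the paper's own proof writes $F^{-1}$) one obtains $\mathbf{S}_{\cdot i\cdot j}=(n+1)c\,F_{\cdot i\cdot j}$, and integrating along the fibre gives only
\[
\mathbf{S}=(n+1)c(x)F+\eta_{i}(x)y^{i}
\]
for some $1$-form $\eta$. Positive homogeneity of degree one does \emph{not} force $\eta=0$, since $\eta_{i}y^{i}$ is itself positively homogeneous of degree one; thus isotropic mean Berwald curvature is in general strictly weaker than isotropic S-curvature, and the equivalence you assert fails. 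To salvage your route you would have to carry the extra $1$-form through the Cheng--Shen coefficient comparison and show $c=0$ and $\eta=0$ simultaneously. That is essentially the content of Cui's result, which is why the paper cites it rather than reproducing the calculation.
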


In \cite{CL}, Cheng-Lu proved that  every weakly Berwald square metric or Matsumoto metric of scalar flag curvature is a Berwald metric. Then Corollary \ref{cor1} is an extension of Cheng-Lu's theorem.

%-----------------------------------------------------------------------------------------------------------------------------------
\section{Preliminary}
%-----------------------------------------------------------------------------------------------------------------------------------
Given a Finsler manifold $(M,F)$, then a global vector field ${\bf G}$ is induced by $F$ on $TM_0:=TM-\{0\}$, which in a standard coordinate $(x^i,y^i)$ for $TM_0$ is given by ${\bf G}=y^i {{\partial} \over {\partial x^i}}-2G^i(x,y){{\partial} \over {\partial y^i}}$, where
\[
G^i:=\frac{1}{4}g^{il}\Big[\frac{\partial^2(F^2)}{\partial x^k \partial y^l}y^k-\frac{\partial(F^2)}{\partial x^l}\Big],\ \ \ \  y\in T_xM.
\]
The ${\bf G}$ is called the  spray associated  to $F$.  For  $y \in T_xM_0$, define ${\bf B}_y:T_xM\otimes T_xM \otimes T_xM\rightarrow T_xM$, ${\bf E}_y:T_xM \otimes T_xM\rightarrow \mathbb{R}$ and  ${\bf D}_y:T_xM\otimes T_xM \otimes T_xM\rightarrow T_xM$  by ${\bf B}_y(u, v, w):=B^i_{\ jkl}(y)u^jv^kw^l{{\partial } \over {\partial x^i}}|_x$, ${\bf E}_y(u,v):=E_{jk}(y)u^jv^k$ and ${\bf D}_y(u,v,w):=D^i_{\ jkl}(y)u^iv^jw^k\frac{\partial}{\partial x^i}|_{x}$, where
\[
B^i_{\ jkl}:={{\partial^3 G^i} \over {\partial y^j \partial y^k \partial y^l}},\ \ \ E_{jk}:={{1}\over{2}}B^m_{\ jkm},\ \ \ D^i_{\ jkl}:=B^i_{\ jkl}-\frac{\partial^3}{\partial y^j\partial y^k\partial y^l}\Bigg[\frac{2}{n+1}\frac{\partial G^m}{\partial y^m} y^i\Bigg].
\]
The $\bf B$, $\bf E$ and $\bf D$ are called the Berwald curvature, mean Berwald curvature and Douglas curvature, respectively.  Then $F$ is called a Berwald metric, weakly Berwald metric and Douglas metric if $\bf{B}=0$, $\bf{E}=0$ and $\bf{D}=0$, respectively. If ${\bf E}=0$, then $\bf{D}=\bf{B}$.

\bigskip
The notion of Riemann curvature for Riemann metrics can be extended
to Finsler metrics. For a non-zero vector $y \in T_xM_{0}$, the Riemann curvature is a family of linear transformation $\textbf{R}_y: T_xM \rightarrow T_xM$ with homogeneity ${\bf R}_{\lambda y}=\lambda^2 {\bf R}_y$, $\forall \lambda>0$ which is defined by
$\textbf{R}_y(u):=R^i_{\ k}(y)u^k {\partial \over {\partial x^i}}$, where
\[
R^i_{\ k}(y)=2{\partial G^i \over {\partial x^k}}-{\partial^2 G^i \over
{{\partial x^j}{\partial y^k}}}y^j+2G^j{\partial^2 G^i \over
{{\partial y^j}{\partial y^k}}}-{\partial G^i \over {\partial
y^j}}{\partial G^j \over {\partial y^k}}.
\]
The family $\textbf{R}:=\{\textbf{R}_y\}_{y\in TM_0}$ is called the Riemann curvature. A Finsler metric $F$ is said to be R-quadratic if ${\bf R}_y$ is quadratic in $y\in T_xM$ at each point $x\in M$.

\bigskip

Let $F:=\alpha\phi(s)$, $s=\beta/\alpha$, be an   $(\alpha, \beta)$-metric on a manifold $M$, where $\phi=\phi(s)$ is a $C^\infty$ function on the $(-b_0, b_0)$ with certain regularity, $\alpha=\sqrt{a_{ij}y^iy^j}$ is a Riemannian metric and $\beta=b_i(x)y^i$ is a 1-form on $M$.   Define $b_{i|j}$ by
\[
b_{i|j}\theta^j:=db_i-b_j\theta^j_i,
\]
where $\theta^i:=dx^i$ and $\theta^j_i:=\Gamma^j_{ik}dx^k$ denote
the Levi-Civita connection form of $\alpha$. Let
\begin{eqnarray*}
r_{ij}:=\frac{1}{2}(b_{i|j}+b_{j|i}), \ \ \ s_{ij}:=\frac{1}{2}(b_{i|j}-b_{j|i}).
\end{eqnarray*}
Clearly, $\beta$ is closed if and only if $s_{ij}=0$.  Put
\begin{eqnarray*}
&&r_{i0}: = r_{ij}y^j, \  \ r_{00}:=r_{ij}y^iy^j, \ \ r_j := b^i r_{ij},\ \ r_0:= r_j y^j,\\
&& \quad s_{i0}:= s_{ij}y^j, \  \  s_j:=b^i s_{ij}, \ \  s_0 := s_j y^j.
\end{eqnarray*}
Now, let $G^i=G^i(x,y)$ and $\bar{G}^i_{\alpha}=\bar{G}^i_{\alpha}(x,y)$ denote the
coefficients  of $F$ and  $\alpha$ respectively in the same coordinate system. By definition, we have
\begin{eqnarray}
G^i=G^i_{\alpha}+\alpha Q s^i_0+(-2Q\alpha s_0+r_{00})(\Theta\frac{y^i}{\alpha}+\Psi b^i),\label{G1}
\end{eqnarray}
where
\begin{eqnarray*}
Q:=\frac{\phi'}{\phi-s\phi'}, \ \ \Theta:={\phi\phi' -s (\phi\phi'' +\phi'\phi')\over 2 \phi \Big[  ( \phi -s\phi')+( b^2 -s^2)\phi''  \Big ]}, \ \ \ \Psi:={1\over 2} { \phi'' \over  ( \phi -s \phi')+( b^2 -s^2)\phi''}.
\end{eqnarray*}

\bigskip

For a Finsler metric $F$ on an $n$-dimensional manifold $M$, the
Busemann-Hausdorff volume form $dV_F = \sigma_F(x) dx^1 \cdots
dx^n$ is defined by
$$
\sigma_F(x) := {{\rm Vol} (\Bbb B^n(1))
\over {\rm Vol} \Big [ (y^i)\in R^n \ \Big | \ F \Big ( y^i
\pxi|_x \Big ) < 1 \Big ] } .\label{dV}
$$
Let $G^i$ denote the geodesic coefficients of $F$ in the same
local coordinate system. The S-curvature is defined by
$$
 {\bf S}({\bf y}) := {\pa G^i\over \pa y^i}(x,y) - y^i {\pa \over \pa x^i}
\Big [ \ln \sigma_F (x)\Big ],\label{Slocal}
$$
where ${\bf y}=y^i\pxi|_x\in T_xM$.  ${\bf S}$ said to be   isotropic if there is a scalar functions $c=c(x)$ on $M$ such that ${\bf S}=(n+1)c F$.

\bigskip

Now, let $\phi=\phi(s)$ be a positive $C^{\infty}$ function on
$(-b_0,b_0)$. For a number $b\in[0,b_0)$, put
\begin{eqnarray*}
&&\Phi:=-(Q-sQ')[n\Delta+1+sQ]-(b^2-s^2)(1+sQ)Q''
\end{eqnarray*}
where $\Delta:=1+sQ+(b^2-s^2)Q'$.  In \cite{ChSh3}, Cheng-Shen  characterize $(\alpha,\beta)$-metrics with isotropic
S-curvature.
\begin{lem}\label{CS0}{\rm (\cite{ChSh3})}
\emph{Let $F=\alpha\phi(s)$, $s=\frac{\beta}{\alpha}$, be an $(\alpha,\beta)$-metric on a $n$-dimensional manifold $M$.
 Suppose that $\phi\neq c_1\sqrt{1+c_2s^2}+c_3s$ for any constant $c_1>0$, $c_2$ and
$c_3$. Then $F$ is of isotropic S-curvature if and only if one of the following holds\\
(a) $\beta$ satisfies
\begin{eqnarray}
r_{ij}=\varepsilon(b^2a_{ij}-b_ib_j), \ \ \ s_j=0,\label{45}
\end{eqnarray}
where $\varepsilon=\varepsilon(x)$ is a scalar function, $b:=\|\beta_x\|_{\alpha}$ and
$\phi=\phi(s)$ satisfies
\begin{eqnarray}
\Phi=-2(n+1)k\frac{\phi\Delta^2}{b^2-s^2},\label{P}
\end{eqnarray}
where $k$ is a constant. In this case, $\textbf{S}=(n+1)cF$ with $c=k\varepsilon$.\\
(b) $\beta$ satisfies
\begin{eqnarray}
r_{ij}=0,\hspace{.5cm}s_j=0\label{44}
\end{eqnarray}
In this case, $\textbf{S}=0$.}
\end{lem}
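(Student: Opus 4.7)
The plan is to compute $\mathbf{S}$ for $F=\alpha\phi(s)$ explicitly and then equate it to $(n+1)cF$. Starting from the spray formula (\ref{G1}), I would first differentiate to obtain $\partial G^i/\partial y^i$, decomposed into the Riemannian piece $\partial\bar G^i_\alpha/\partial y^i$ plus correction terms involving $r_{00}$, $s_0$, $s^i{}_0$, and the scalar coefficients $Q,\Theta,\Psi$. A standard computation shows that $y^i\partial_{x^i}\ln\sigma_F$ depends only on $\alpha$ and the norm $b=\|\beta\|_\alpha$ via the Busemann volume ratio $f(b)$. Assembling these pieces produces an explicit formula of the shape
\[
\mathbf{S} = \frac{\Phi}{2\alpha\Delta^2}\bigl(r_{00}-2\alpha Q s_0\bigr) + \alpha\Bigl(2\Psi - \frac{f'(b)}{b f(b)}\Bigr)(\cdots),
\]
in terms of the auxiliary functions defined in the statement.

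Next I would impose $\mathbf{S}=(n+1)c\alpha\phi(s)$ and multiply through by $\alpha\Delta^2$. The resulting identity has the form $A(x,y)+\alpha B(x,y)=0$, where $A$ and $B$ become polynomial in $y$ after expressing $s=\beta/\alpha$. Since $\alpha$ is irrational in $y$ generically, one may split this into $A=0$ and $B=0$ separately; this rational/irrational separation is the engine of the proof. Extracting the coefficients of $r_{00}$, $r_j$, $s_0$, and $s_j$, and using the hypothesis that $\phi$ is not of Randers type so that $\Phi$ does not degenerate, I would conclude that $s_j=0$ and that $r_{ij}$ must be proportional to the angular tensor $b^2 a_{ij}-b_i b_j$, giving (\ref{45}) for some scalar $\varepsilon(x)$. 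Substituting this back and isolating the pure $s$-dependence in what remains reduces the residual equation to the single ODE (\ref{P}) with $c=k\varepsilon$; case (b) is the specialization $\varepsilon\equiv 0$, in which $\mathbf{S}=0$ directly. Conversely, plugging either (a) or (b) back into the explicit formula for $\mathbf{S}$ recovers $\mathbf{S}=(n+1)cF$ by inspection, so the equivalence is bidirectional.

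The main obstacle will be the bookkeeping in the coefficient extraction. Since $Q,\Theta,\Psi$ and their derivatives are tied together by algebraic identities as functions of $s$, one must use the non-Randers hypothesis in an essential way: Randers metrics are precisely the $(\alpha,\beta)$-metrics for which the $\alpha$-irrational part of $\mathbf{S}$ collapses so that no constraint on the symmetric part of $b_{i|j}$ is forced. Excluding them guarantees that the $\beta$-geometric conditions (\ref{45}) and the $\phi$-analytic condition (\ref{P}) can be genuinely decoupled, which is exactly what gives the lemma its rigid two-case structure.
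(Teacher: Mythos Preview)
The paper does not actually prove this lemma: it is stated as a cited result from Cheng--Shen \cite{ChSh3} and is used as a black box in the proof of Theorem~\ref{mainthm1}. There is therefore no ``paper's own proof'' to compare your sketch against.

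That said, your outline matches the standard strategy that Cheng--Shen employ in \cite{ChSh3}: compute $\mathbf{S}$ from (\ref{G1}), set it equal to $(n+1)cF$, clear denominators, and exploit the irrationality of $\alpha$ in $y$ to split the resulting identity into two polynomial equations whose coefficient analysis forces $s_j=0$ and $r_{ij}=\varepsilon(b^2a_{ij}-b_ib_j)$, with the residual $s$-dependence collapsing to the ODE (\ref{P}). Your identification of the non-Randers hypothesis as the nondegeneracy condition preventing collapse of the irrational part is also correct. The only caveat is that the actual execution in \cite{ChSh3} requires a fairly delicate intermediate step---one first shows $r_j+s_j=0$ and then upgrades to $s_j=0$ separately---and the ``bookkeeping'' you allude to is substantial rather than routine. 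If you were to write this out in full you would essentially be reproducing the Cheng--Shen paper, which is why the present authors simply cite it.
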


%---------------------------------------------------------------------------------------------------------------
\section{Sakaguchi Theorem}
%---------------------------------------------------------------------------------------------------------------
Here, we give a proof of Sakaguch's Theorem. Our approach  is completely  different from Sakaguchi'method. For this aim, we  remark the following.
\begin{lem}{\rm (\cite{Sh})}
\emph{The following  Bianchi identities  holds:}
\begin{eqnarray}
&&R^i_{\ jkl|m}+ R^i_{\ jlm|k}+R^i_{\ jmk|l}=B^i_{\ jku}R^u_{\ lm}+B^i_{\ jlu}R^u_{\ km}+B^i_{\ klu}R^u_{\ jm},\\
&&B^i_{\ jkl|m}- B^i_{\ jmk|l}=R^i_{\ jml,k},\label{R8}\\
&&B^i_{\ jkl,m}=B^i_{\ jkm,l},
\end{eqnarray}
\emph{where `` $|$ " and ``, " \  denote the $h$- and $v$- covariant derivatives  with respect to the  Berwald connection, respectively.}
\end{lem}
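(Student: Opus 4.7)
The plan is to derive the three identities simultaneously from the structure equations of the Berwald connection on the pullback bundle $\pi^{*}TM\to TM_{0}$. I would introduce the Berwald connection $1$-forms $\omega^{i}_{\ j}:=G^{i}_{jk}(x,y)\,dx^{k}$ with $G^{i}_{jk}:=\partial^{2}G^{i}/\partial y^{j}\partial y^{k}$, together with the horizontal coframe $dx^{k}$ and the vertical coframe $\delta y^{k}:=dy^{k}+G^{k}_{\ l}\,dx^{l}$, $G^{k}_{\ l}:=\partial G^{k}/\partial y^{l}$. The crucial preliminary computation is that the curvature $2$-form $\Omega^{i}_{\ j}:=d\omega^{i}_{\ j}-\omega^{i}_{\ k}\wedge\omega^{k}_{\ j}$ has only $hh$ and $hv$ components, namely
\[
\Omega^{i}_{\ j}=\tfrac{1}{2}R^{i}_{\ jkl}\,dx^{k}\wedge dx^{l}-B^{i}_{\ jkl}\,dx^{k}\wedge\delta y^{l},
\]
since $\partial G^{i}_{jk}/\partial y^{l}=B^{i}_{\ jkl}$ and the purely vertical part of $\Omega^{i}_{\ j}$ vanishes. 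In particular, this decomposition immediately yields the third identity $B^{i}_{\ jkl,m}=B^{i}_{\ jkm,l}$, which simply records the symmetry of the fourth partial $\partial^{4}G^{i}/\partial y^{j}\partial y^{k}\partial y^{l}\partial y^{m}$.

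Next I would apply $d^{2}\omega^{i}_{\ j}=0$ to obtain the differential Bianchi identity
\[
d\Omega^{i}_{\ j}=\omega^{i}_{\ k}\wedge\Omega^{k}_{\ j}-\Omega^{i}_{\ k}\wedge\omega^{k}_{\ j},
\]
and then expand both sides in the $3$-form basis $\{dx\wedge dx\wedge dx,\ dx\wedge dx\wedge\delta y\}$ using the structure equations $d(dx^{i})=0$ and $d(\delta y^{i})=-\omega^{i}_{\ k}\wedge\delta y^{k}+\tfrac{1}{2}R^{i}_{\ kl}\,dx^{k}\wedge dx^{l}$. The resulting coefficient equations split by type: the $dx\wedge dx\wedge dx$ components, after cyclic summation over $(k,l,m)$, produce the first identity, with the quadratic $B\cdot R$ terms on the right coming precisely from the $\omega\wedge\Omega$ cross terms fed by $R^{i}_{\ kl}$ in $d(\delta y^{i})$; the $dx\wedge dx\wedge\delta y$ components produce the second identity \eqref{R8}, expressing the $h$-commutator of $B$ as the vertical derivative $R^{i}_{\ jml,k}$.

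The main obstacle will be bookkeeping of signs and index placements, since one has to reconcile Shen's conventions for $R^{i}_{\ jkl}$ versus the reduced $R^{i}_{\ kl}$ appearing in $d(\delta y^{i})$, and track the signs introduced by the anticommutativity of wedge products and by the negative sign in the $hv$-piece of $\Omega^{i}_{\ j}$. Once the decomposition of $\Omega^{i}_{\ j}$ and the structure equation for $d(\delta y^{i})$ are pinned down with fixed conventions, all three identities fall out as direct coefficient comparisons on $3$-forms, with no further analytic or global input required.
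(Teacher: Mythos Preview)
The paper does not prove this lemma at all: it is quoted verbatim from Shen's book \cite{Sh} and simply invoked as a known result, with no argument supplied. Your proposal, by contrast, sketches the standard derivation via the structure equations and the differential Bianchi identity $d\Omega^{i}_{\ j}=\omega^{i}_{\ k}\wedge\Omega^{k}_{\ j}-\Omega^{i}_{\ k}\wedge\omega^{k}_{\ j}$ for the Berwald connection, which is exactly how Shen obtains these identities in the cited reference. So your approach is correct and is in fact the proof behind the citation; the only caveat is your own stated one, namely that the sign and index conventions (in particular the placement of $j$ versus the cyclic indices in the first identity, and the precise definition of $R^{i}_{\ jkl}$ versus $R^{i}_{\ kl}$) must be fixed at the outset and tracked carefully, since different sources disagree and the form of the right-hand side of the first identity is sensitive to these choices.
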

\bigskip

\begin{thm}
(Sakaguchi) For any Finsler metric of scalar flag curvature, there is a tensor $D_{jkl}$ such that
\be
D^i_{\ jkl|m}y^m=D_{jkl}y^i.
\ee
This means that every Weyl metric is a generalized Douglas-Weyl metric.
\end{thm}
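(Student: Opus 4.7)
The plan is to combine the second Bianchi identity from the preceding lemma with the algebraic rigidity that scalar flag curvature imposes on $\mathbf{R}$, and then to show that the $E$-curvature trace built into the definition of the Douglas curvature absorbs precisely the non-$y^i$ pieces that are left over.

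First, I would contract the identity $B^i_{\ jkl|m} - B^i_{\ jmk|l} = R^i_{\ jml,k}$ with $y^m$. In the Berwald connection one has $y^m_{|l}=0$, and the Berwald curvature $B^i_{\ jmk}$ is totally symmetric in its lower indices and satisfies $B^i_{\ jmk}y^m = 0$, this being a consequence of the $2$-homogeneity of $G^i$. Therefore $B^i_{\ jmk|l}y^m = (B^i_{\ jmk}y^m)_{|l} = 0$, and the Bianchi identity collapses to $B^i_{\ jkl|m}y^m = R^i_{\ jml,k}y^m$.

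Next, the hypothesis of scalar flag curvature is equivalent to $R^i_{\ k} = K(x,y)(F^2\delta^i_k - y_k y^i)$. Using the standard relation $R^i_{\ jkl} = \tfrac{1}{3}(\partial^2 R^i_{\ k}/\partial y^j\partial y^l - \partial^2 R^i_{\ l}/\partial y^j\partial y^k)$, together with the identities $R^i_{\ k}y^k = 0$ and $C_{abm}y^m = 0$, I would compute $R^i_{\ jml,k}y^m$ explicitly. The output separates into four kinds of terms, namely those proportional to $\delta^i_j$, $\delta^i_k$, $\delta^i_l$, and $y^i$, with scalar coefficients built from $K$, its vertical derivatives, $g_{ab}$, and $y_a$. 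On the other side, expanding the definition of the Douglas curvature gives
\[
D^i_{\ jkl} = B^i_{\ jkl} - \tfrac{4}{n+1}\bigl\{E_{kl,j}\,y^i + E_{kl}\,\delta^i_j + E_{jl}\,\delta^i_k + E_{jk}\,\delta^i_l\bigr\},
\]
and since $y^i_{|m}=0$ and Kronecker deltas are covariantly constant, the operation $|_m y^m$ only hits the $E$-coefficients and produces correction terms of exactly the same three $\delta^i_\bullet$ shapes. What remains to verify is that the $\delta^i_j$-, $\delta^i_k$-, $\delta^i_l$-coefficients arising in $R^i_{\ jml,k}y^m$ coincide with the corresponding $\tfrac{4}{n+1}E_{\bullet\bullet|0}$ terms. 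Once this matching is done, the leftover is a pure $y^i$-multiple, and its $(j,k,l)$-coefficient is the desired $D_{jkl}$.

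The main obstacle is exactly this coefficient matching: several $\delta^i_\bullet$-type terms have to pair up, and the identification relies on expressing $E_{jk} = \tfrac{1}{2}B^m_{\ jkm}$ in terms of traces of $\mathbf{R}$ that are forced by the scalar-flag-curvature form of $R^i_{\ k}$. Once these $\delta$-pieces cancel, the $y^i$-supported part goes through automatically, yielding the required $D^i_{\ jkl|m}y^m = D_{jkl}y^i$.
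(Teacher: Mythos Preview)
Your plan is correct and follows essentially the same route as the paper: contract the Bianchi identity \eqref{R8} with $y$ to reduce $B^i_{\ jkl|m}y^m$ to a vertical derivative of the Riemann curvature, expand the latter using the scalar--flag--curvature form of $R^i_{\ k}$, and then check that the $\delta^i_{\bullet}$ pieces are absorbed by the $E$-terms in the Douglas tensor. The only thing the paper adds beyond your outline is carrying out the matching you flag as ``the main obstacle'': it traces the resulting expression for $B^i_{\ jml|0}$ to obtain $H_{jk}=E_{jk|m}y^m$ explicitly in terms of $\mathbf K$ and its vertical derivatives, and then substitutes this back so that the $\delta^i_j,\delta^i_k,\delta^i_l$ contributions visibly cancel, leaving only the $y^i$ part.
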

\begin{proof}
By definition, we have
\begin{equation}
D^i_{\ jkl}=B^i_{\ jkl}-\frac{2}{n+1}\{E_{jk}\delta^i_{\ l}+E_{kl}\delta^i_{\ j}+E_{lj}\delta^i_{\ k}+E_{jk,l}y^i\}.\label{GD2}
\end{equation}
Taking a horizontal derivation of (\ref{GD2}) yields
\begin{equation}
D^i_{\ jkl|m}y^m=B^i_{\ jkl|m}y^m-\frac{2}{n+1}\Big\{H_{jk}\delta^i_{\ l}+H_{kl}\delta^i_{\ j}+H_{lj}\delta^i_{\ k}+E_{jk,l|m}y^my^i\Big\},\label{D1}
\end{equation}
where $H_{ij}:=E_{ij|m}y^m$ (see \cite{NST2}). We have
\begin{eqnarray}
R^i_{\ jkl}\!\!\!\!&=&\!\!\!\!\ \frac{1}{3}\Big\{\frac{\partial^2 R^i_{\ k}}{\partial y^j \partial y^l}-\frac{\partial^2 R^i_{\ l}}{\partial y^j \partial y^k}\Big\}.\label{H5}
\end{eqnarray}
By assumption, $F$ is of scalar curvature ${\bf K}={\bf K}(x,y)$, which is equivalent to $R^i_{\ k} = {\bf K}F^2h^i_k$. Plugging it into (\ref{H5}) gives
\begin{eqnarray}\label{H20}
\nonumber R^i_{\ jkl}\!\!\!\!&=&\!\!\!\!\ \frac{1}{3}F^2\{{\bf K}_{,j,l}h^i_k-{\bf K}_{,j,k}h^i_l\}+{\bf K}_{,j}\{FF_{y^l}h^i_k-FF_{y^k}h^i_l\}\\ \nonumber
\!\!\!\!&&\!\!\!\!\ +\frac{1}{3} {\bf K}_{,k}\{2FF_{y^j}\delta^i_l-g_{jl}y^i-FF_{y^l}\delta^i_j\}+{\bf K}\{g_{jl}\delta^i_k-g_{jk}\delta^i_l\}\\
\!\!\!\!&&\!\!\!\!\ +\frac{1}{3} {\bf K}_{,l}\{2FF_{y^j}\delta^i_k-g_{jk}y^i-FF_{y^k}\delta^i_j\}.
\end{eqnarray}
Differentiating (\ref{H20}) with respect to $y^m$ gives a formula for $R^i_{\ jkl,m}$ expressed in terms of ${\bf K}$ and its derivatives. Contracting  (\ref{R8}) with $y^k$, we  obtain
\begin{eqnarray}
B^i_{\ jml|k}y^k =2{\bf K}C_{jlm}y^i \!\!\!\!&-&\!\!\!\!\! \frac{{\bf K}_{,j}}{3}\Big\{FF_{y^l}\delta^i_m+FF_{y^m}\delta^i_l-2g_{lm}y^i \Big \} \nonumber\\
\!\!\!\!&-&\!\!\!\!\! \frac{{\bf K}_{,l}}{3}\Big\{FF_{y^j}\delta^i_m+FF_{y^m}\delta^i_j-2g_{jm}y^i \Big \}\nonumber\\
\!\!\!\!&-&\!\!\!\!\! \frac{{\bf K}_{,m}}{3}\Big\{FF_{y^j}\delta^i_l+FF_{y^l}\delta^i_j-2g_{jl}y^i \Big \} \nonumber\\
\!\!\!\!&-&\!\!\!\!\! \frac{1}{3}F^2\Big\{{\bf K}_{,j,m}h^i_l+{\bf K}_{,j,l}h^i_m+{\bf K}_{,l,m}h^i_j\Big\},\label{H21}
\end{eqnarray}
where $C_{jlm}:=\frac{1}{4}[F^2]_{y^jy^ly^m}$ denotes the Cartan torsion of $F$. For (\ref{H21}), see (11.24) in \cite{Sh}. It follows from (\ref{H21}) that
\be
H_{jk}=-\frac{n+1}{6}\Big\{y_l{\bf K}_{,j}+y_j{\bf K}_{,l}+F^2{\bf K}_{,j,l}\Big\}.\label{H22}
\ee
By plugging (\ref{H21}) and  (\ref{H22}) in  (\ref{D1}) we get
\begin{eqnarray}
\nonumber D^i_{\ jkl|m}y^m=&&2{\bf K}C_{jlm}y^i +\frac{2}{3}\Big\{ {\bf K}_{,j}g_{kl}+{\bf K}_{,l}g_{kj}+{\bf K}_{,k}g_{jl}\Big\}y^i\\
&&+\frac{1}{3}\Big\{ y_{,j}{\bf K}_{,k,l}+  y_{,k}{\bf K}_{,j,l}+ y_{,l}{\bf K}_{,k,j}\Big\}y^i- \frac{2}{n+1}E_{jk,l|m}y^my^i.
\end{eqnarray}
Then every Weyl metric is a generalized Douglas-Weyl metric.
\end{proof}

%--------------------------------------------------------------------------------------------------------------------
\section{Proof of Theorem \ref{mainthm1}}
%--------------------------------------------------------------------------------------------------------------------
In this section, we are going to prove Theorem \ref{mainthm1}. Indeed, we characterize regular generalized Douglas-Weyl metric $(\alpha,\beta)$-metrics $F=\alpha\phi(s)$, $s=\beta/\alpha$, with vanishing S-curvature.

\bigskip

\noindent
{\bf Proof of Theorem \ref{mainthm1}.} Since $F$ has vanishing S-curvature then by (\ref{44}) we have $r_{ij}=0$ and $s_j=0$. Then (\ref{G1}) reduces to following
\begin{eqnarray}
G^i=G^i_{\alpha}+\alpha Q s^i_{\ 0}.\label{045}
\end{eqnarray}
By differential (\ref{045})  with respect to $y^j$, $y^l$ and $y^k$ we get
\begin{eqnarray}
\nonumber B^i_{\ jkl}=\!\!\!\!&&\!\!\!\! s^i_{\ l}\Big[Q\alpha_{jk}+Q_{k}\alpha_j+Q_{j}\alpha_k+\alpha Q_{jk}\Big]+ s^i_{\ j}\Big[Q\alpha_{lk}+Q_{k}\alpha_l+Q_{l}\alpha_k+\alpha Q_{lk}\Big]\\
\!\!\!\!&+&\!\!\!\! \nonumber s^i_{\ k}\Big[Q\alpha_{jl}+Q_{j}\alpha_l+Q_{l}\alpha_j+\alpha Q_{jl}\Big]+ s^i_{\ 0}\Big[\alpha_{jkl}Q+\alpha_{jk}Q_l+\alpha_{lk}Q_j+\alpha_{lj}Q_k\Big]\\
\!\!\!\!&+&\!\!\!\!  s^i_{\ 0}\Big[\alpha Q_{jkl}+\alpha_lQ_{jk}+\alpha_jQ_{lk}+\alpha_kQ_{jl}\Big].\label{046}
\end{eqnarray}
Contracting (\ref{046}) with $h^m_i$ implies that
\begin{eqnarray}
\nonumber h^m_iB^i_{\ jkl}=\!\!\!\!\!\!&&\!\!\!\!\!\!   s^m_{\ 0}\Big[\alpha_{jkl}Q+\alpha_{jk}Q_l+\alpha_{lk}Q_j+\alpha_{lj}Q_k+\alpha Q_{jkl}+\alpha_lQ_{jk}+\alpha_jQ_{lk}+\alpha_kQ_{jl}\Big]\\
\nonumber\!\!\!\!\!\!&+&\!\!\!\!\!\! \Big(s^m_{\ l}-F^{-2}s^0_{\ l}y^m\Big)\Big[Q\alpha_{jk}+Q_{k}\alpha_j+Q_{j}\alpha_k+\alpha Q_{jk}\Big]\\
\!\!\!\!\!\!&+&\!\!\!\!\!\! \nonumber \Big(s^m_{\ j}-F^{-2}s^0_{\ j}y^m\Big)\Big[Q\alpha_{lk}+Q_{k}\alpha_l+Q_{l}\alpha_k+\alpha Q_{lk}\Big]\\
\!\!\!\!\!\!&+&\!\!\!\!\!\!  \Big(s^m_{\ k}-F^{-2}s^0_{\ k}y^m\Big)\Big[Q\alpha_{jl}+Q_{j}\alpha_l+Q_{l}\alpha_j+\alpha Q_{jl}\Big].\label{047}
\end{eqnarray}
Taking a horizontal derivation of Douglas curvature  and contracting the result with $h^m_i$ implies that
\begin{equation}
h^m_iD^i_{\ jkl|s}y^s=h^m_iB^i_{\ jkl|s}y^s-\frac{2}{n+1}\Big\{H_{jk}h^m_l+H_{kl}h^m_j+H_{lj}h^m_k\Big\}.\label{GD3}
\end{equation}
By assumption ${\bf S}=0$ and then $H_{ij}=0$. Since $h^m_{\ i|s}=0$, then (\ref{GD3}) reduces to following
\be
\big(h^m_iB^i_{\ jkl}\big)_{|s}y^s=h^m_iB^i_{\ jkl|s}y^s=0.\label{GD4}
\ee
By (\ref{047}) and (\ref{GD4}), we have
\begin{eqnarray}
\nonumber h^m_iB^i_{\ jkl|s}y^s= \!\!\!\!\!&&\!\!\!\! s^m_{\ 0|0}\Big(\alpha_{jkl}Q+\alpha_{jk}Q_l+\alpha_{lk}Q_j+\alpha_{lj}Q_k+\alpha Q_{jkl}+\alpha_lQ_{jk}+\alpha_jQ_{lk}+\alpha_kQ_{jl}\Big)
\\
\nonumber \!\!\!\!\!&+&\!\!\!\! \Big(s^m_{\ l}-F^{-2}s^0_{\ l}y^m\Big)\Big(Q_{|0}\alpha_{jk}+Q_{k|0}\alpha_j+Q_{j|0}\alpha_k+\alpha Q_{jk|0}\Big)
\\
\nonumber \!\!\!\!\!&+&\!\!\!\! \Big(s^m_{\ j}-F^{-2}s^0_{\ j}y^m\Big)\Big(Q|_0\alpha_{lk}+Q_{k|0}\alpha_l+Q_{l|0}\alpha_k+\alpha Q_{lk|0}\Big)
\\
\nonumber \!\!\!\!\!&+&\!\!\!\! \Big(s^m_{\ k}-F^{-2}s^0_{\ k}y^m\Big)\Big(Q|_0\alpha_{jl}+Q_{j|0}\alpha_l+Q_{l|0}\alpha_j+\alpha Q_{jl|0}\Big)
\\
\nonumber \!\!\!\!\!&+&\!\!\!\! \Big(s^m_{\ l|0}-F^{-2}s^0_{\ l|0}y^m\Big)\Big(Q\alpha_{jk}+Q_{k}\alpha_j+Q_{j}\alpha_k+\alpha Q_{jk}\Big)
\\
\nonumber \!\!\!\!\!&+&\!\!\!\! \Big(s^m_{\ j|0}-F^{-2}s^0_{\ j|0}y^m\Big)\Big(Q\alpha_{lk}+Q_{k}\alpha_l+Q_{l}\alpha_k+\alpha Q_{lk}\Big)
\\
\nonumber \!\!\!\!\!&+&\!\!\!\!  \Big(s^m_{\ k|0}-F^{-2}s^0_{\ k|0}y^m \Big)\Big(Q\alpha_{jl}+Q_{j}\alpha_l+Q_{l}\alpha_j+\alpha Q_{jl}\Big)
\\
\nonumber\!\!\!\!\!&+&\!\!\!\! s^m_{\ 0}\Big(\alpha_{jkl}Q_{|0}+\alpha_{jk}Q_{l|0}+\alpha_{lk}Q_{j|0}+\alpha_{lj}Q_{k|0}+\alpha Q_{jkl|0}\Big)
\\
\!\!\!\!\!&+&\!\!\!\! s^m_{\ 0}\Big( \alpha_lQ_{jk|0}+\alpha_jQ_{lk|0}+\alpha_kQ_{jl|0}\Big)=0.\label{048}
\end{eqnarray}
Taking a horizontal derivation of  $y_ms^m_{\ 0}=0$ implies that $y_{m|0}s^m_{\ 0}+y_ms^m_{\ 0|0}=0$. Since $y_{m|0}=0$, then we get $y_ms^m_{\ 0|0}=0$. Therefore  by contracting (\ref{048}) with $y_m$ it follows that
\begin{eqnarray}
\nonumber  \Big(1-\alpha^2F^{-2} \Big)\Bigg[\!\!\!\!\!&&\!\!\!\!\!
s^0_{\ l}\Big(Q_{|0}\alpha_{jk}+Q_{k|0}\alpha_j+Q_{j|0}\alpha_k+\alpha Q_{jk|0}\Big)\\
\nonumber \!\!\!\!\!&+&\!\!\!\!\!  s^0_{\ j}\Big(Q|_0\alpha_{lk}+Q_{k|0}\alpha_l+Q_{l|0}\alpha_k+\alpha Q_{lk|0}\Big)\\
\nonumber\!\!\!\!\!&+&\!\!\!\!\!  s^0_{\  k}\Big(Q|_0\alpha_{jl}+Q_{j|0}\alpha_l+Q_{l|0}\alpha_j+\alpha Q_{jl|0}\Big)\\
\nonumber\!\!\!\!\!&+&\!\!\!\!\!  s^0_{\ l|0}\Big(Q\alpha_{jk}+Q_{k}\alpha_j+Q_{j}\alpha_k+\alpha Q_{jk}\Big)\\
\nonumber\!\!\!\!\!&+&\!\!\!\!\!  s^0_{\ j|0}\Big(Q\alpha_{lk}+Q_{k}\alpha_l+Q_{l}\alpha_k+\alpha Q_{lk}\Big)\\
\!\!\!\!\!&+&\!\!\!\!\!  s^0_{\ k|0}\Big(Q\alpha_{jl}+Q_{j}\alpha_l+Q_{l}\alpha_j+\alpha Q_{jl}\Big)
\Bigg]=0.\label{0.481}
\end{eqnarray}
Since $F$ is not Riemannian, then   $(1-F^{-2}\alpha^2)\neq 0$. Therefore   (\ref{0.481}) reduces to following
\begin{eqnarray}
\nonumber
\!\!\!\!\!&&\!\!\!\!\! s^0_{\ l}\Big(Q_{|0}\alpha_{jk}+Q_{k|0}\alpha_j+Q_{j|0}\alpha_k+\alpha Q_{jk|0}\Big)+s^0_{\ l|0}\Big(Q\alpha_{jk}+Q_{k}\alpha_j+Q_{j}\alpha_k+\alpha Q_{jk}\Big)\\
\nonumber \!\!\!\!\!&+&\!\!\!\!\!  s^0_{\ j}\Big(Q|_0\alpha_{lk}+Q_{k|0}\alpha_l+Q_{l|0}\alpha_k+\alpha Q_{lk|0}\Big)+s^0_{\ j|0}\Big(Q\alpha_{lk}+Q_{k}\alpha_l+Q_{l}\alpha_k+\alpha Q_{lk}\Big)\\
\!\!\!\!\!&+&\!\!\!\!\!  s^0_{\ k}\Big(Q|_0\alpha_{jl}+Q_{j|0}\alpha_l+Q_{l|0}\alpha_j+\alpha Q_{jl|0}\Big)+  s^0_{\ k|0}\Big(Q\alpha_{jl}+Q_{j}\alpha_l+Q_{l}\alpha_j+\alpha Q_{jl}\Big)
=0.\label{0.49}
\end{eqnarray}
Since $s_j=b_ms^m_{\ j}=0$, then we have
\begin{eqnarray}
0=(s_j)_{|0}=\big(b_ms^m_{\ j}\big)_{|0}=b_{m|0}s^m_{\ j}+b_ms^m_{\ j|0}=(r_{m0}+s_{m0})s^m_{\ j}+b_ms^m_{\ j|0}.\label{049}
\end{eqnarray}
Since  $r_{00}=0$, then (\ref{049}) reduces to following
\begin{eqnarray}
b_ms^m_{\ j|0}=-s_{m0}s^m_{\ j}.\label{050}
\end{eqnarray}
Multiplying  (\ref{050}) with $y^j$ yields
\be
b_ms^m_{\ 0|0}=-s_{m0}s^m_{\ 0}.\label{ee1}
\ee
Contracting (\ref{050}) with  $b^j$ implies that
\be
b^jb_ms^m_{\ j|0}=0.\label{ee2}
\ee
By considering (\ref{ee1}) and (\ref{ee2}) and multiplying (\ref{0.49}) with $b^jb^kb^l$ it follows that
\begin{eqnarray}
s^{m}_{\ 0}s_{m0}\Big(\alpha_2Q+2\alpha_1Q_1+\alpha Q_2\Big)=0,\label{054}
 \end{eqnarray}
where
\begin{eqnarray*}
\alpha_1: = b^i\alpha_{y^i}, \ \ \ \alpha_2:=b^ib^j\alpha_{y^iy^j}, \ \ \ \ Q_1: = b^iQ_{y^i}, \  \ Q_2:=b^ib^jQ_{y^iy^j}.
\end{eqnarray*}
By (\ref{054}),  we have $Q\alpha_2+2\alpha_1Q_1+\alpha Q_2=0$ or $s^{m}_{\ 0}s_{m0}=0$. Suppose that, the first case holds
\be
Q\alpha_2+2\alpha_1Q_1+\alpha Q_2=0.\label{eq}
\ee
 By definition, we have the following
\begin{eqnarray*}
&&\alpha_{y^i}=\alpha^{-1}y_i, \ \ \ \alpha_{y^jy^k}=\alpha^{-3}A_{jk}, \ \ \ \ \alpha_{y^jy^ky^l}=-\alpha^{-5}[A_{jk}y_l+A_{jl}y_k+A_{lk}y_j],
\end{eqnarray*}
where $A_{jk}:=\alpha^2a_{jk}-y_jy_k$. So we get
\begin{eqnarray}
&& \alpha_1=s,\label{a1}\\
&& \alpha_2=(b^2-s^2)\alpha^{-1},\label{a2}\\
&&\alpha_3=-3s(b^2-s^2)\alpha^{-2},\label{003}\\
&&Q_1= Q'\frac{(b^2-s^2)}{\alpha},\label{a3}\\
&&Q_2=\frac{(b^2-s^2)\big[(b^2-s^2)Q''-3sQ'\big]}{\alpha^2},\label{005}\\
&& Q_3= \frac{3(b^2-s^2)\big[(b^2-s^2)^2Q'''-3s(b^2-s^2)Q''-(b^2-5s^2)Q'\big]}{\alpha^3}.\label{006}
\end{eqnarray}
By plugging (\ref{a1}), (\ref{a2}), (\ref{003}), (\ref{a3}),  (\ref{005}) and (\ref{006}) into (\ref{eq}),  we  get
\begin{eqnarray*}
(b^2-s^2)Q''+(Q-s Q')=0,
\end{eqnarray*}
which implies that
\begin{eqnarray}
Q=ks+q\sqrt{b^2-s^2},\label{eq2}
\end{eqnarray}
where $k$ and $q$ were constants. By (\ref{eq2}), it follows that
\begin{eqnarray}
\phi=c\exp\Bigg[\int_0^s{\frac{kt+q\sqrt{b^2-t^2}}{1+kt^2+qt\sqrt{b^2-t^2}}dt}\Bigg],\label{uni}
\end{eqnarray}
where $c$ is a real constant. But, (\ref{uni}) is an almost regular $(\alpha, \beta)$-metric (for more details, see \cite{ShC}). Consequently,  we conclude that $s^{m}_{\ 0}s_{m0}=0$ which implies that $\beta$ is closed. Therefore by  (\ref{046}), $F$  reduces to a Berwald metric.
\qed

\begin{rem}\label{rem}
In the Theorem \ref{mainthm1}, if we suppose  that $F=\alpha\phi(s)$ is almost regular metric and ${\bf B}\neq 0$ then it reduces to (\ref{uni}), where  $c>0$, $q>0$ and $k$ are real constants. Since ${\bf S}=0$, then $F$ can not be a Douglas metric. By Cheng' Theorem (Theorem 4 in \cite{Cheng}), if $F$ is a Weyl metric then it reduces to a Berwald metric. Thus $F$ is not a Weyl metric. By  Theorem 1.2 in \cite{ShC}, since $s_{ij}\neq 0$ then $F$ is not a Landsberg metric, also.
\end{rem}

\bigskip
\noindent
{\bf Proof of Corollary  \ref{cor1}:}  In \cite{Cui}, Cui shows that for the Matsumoto metric $F=\frac{\alpha^2}{\alpha-\beta}$ and the special  $(\alpha, \beta)$-metric $F=\alpha+\epsilon\beta+\kappa(\beta^2/\alpha)$ $(\kappa\neq 0)$,  ${\bf E}= \frac{n+1}{2} c F^{-1}{\bf h}$ if and only if ${\bf S}=0$. Then by Theorem \ref{mainthm1}, we get the proof.
\qed
\bigskip

In \cite{TSP}, it is proved that every R-quadratic Finsler metric is a generalized Douglas-Weyl metric.   Thus  by Theorem \ref{mainthm1}, we get the following.

\begin{cor}
Let $F=\alpha\phi(s)$, $s=\beta/\alpha$, be a regular  $(\alpha,\beta)$-metric on a manifold $M$ with vanishing S-curvature. Suppose that $F$ is not a Finsler metric of Randers type. Then  $F$  R-quadratic  if and only if it is a Berwald metric.
\end{cor}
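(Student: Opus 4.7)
The plan is to reduce the corollary to Theorem \ref{mainthm1} by invoking the external fact cited from \cite{TSP}: every R-quadratic Finsler metric lies in $\mathcal{GDW}(M)$. Once that is in hand, each direction becomes a short deduction.

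For the nontrivial direction, suppose $F = \alpha\phi(s)$ is R-quadratic. Then by the cited result of \cite{TSP}, $F$ is a generalized Douglas-Weyl metric. Combined with the standing hypotheses that $F$ has vanishing S-curvature, is regular, and is not of Randers type, Theorem \ref{mainthm1} applies directly and forces $F$ to be Berwaldian. No further computation is needed here.

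For the converse, assume $F$ is a Berwald metric. Then its spray coefficients take the form $G^i = \tfrac{1}{2}\Gamma^i_{jk}(x)y^jy^k$, so the Christoffel-type expressions $\partial G^i/\partial y^j$ are linear in $y$ and $\partial^2 G^i/\partial y^j\partial y^k$ depends only on $x$. Substituting into
\[
R^i_{\ k}(y) = 2\frac{\partial G^i}{\partial x^k} - \frac{\partial^2 G^i}{\partial x^j\partial y^k}y^j + 2G^j\frac{\partial^2 G^i}{\partial y^j\partial y^k} - \frac{\partial G^i}{\partial y^j}\frac{\partial G^j}{\partial y^k},
\]
one checks term by term that each summand is homogeneous of degree two in $y$; hence $\mathbf{R}_y$ is quadratic in $y$, i.e.\ $F$ is R-quadratic.

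There is essentially no obstacle: the heavy lifting was already done in Theorem \ref{mainthm1} and in \cite{TSP}. The only thing to watch is that the hypotheses of Theorem \ref{mainthm1} (regularity, vanishing S-curvature, non-Randers type) are all assumed in the corollary, so the cited theorem applies verbatim once R-quadraticity has been converted into membership in $\mathcal{GDW}(M)$.
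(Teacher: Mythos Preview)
Your argument is correct and matches the paper's approach exactly: the paper simply cites \cite{TSP} to convert R-quadratic into generalized Douglas-Weyl and then invokes Theorem \ref{mainthm1}, leaving the converse (Berwald $\Rightarrow$ R-quadratic) as a standard fact. Your write-up just spells out that converse explicitly; one small stylistic point is that ``homogeneous of degree two'' should really be ``a quadratic polynomial in $y$'' (which is what your term-by-term check actually shows), since mere positive homogeneity would not suffice.
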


%-----------------------------------------------------------------------------------

\bigskip

\noindent
Akbar Tayebi and Hassan Sadeghi\\
Department of Mathematics, Faculty  of Science\\
University of Qom \\
Qom. Iran\\
Email:\ akbar.tayebi@gmail.com\\
Email:\ sadeghihassan64@gmail.com
\end{document}